\newtheorem{theorem}{Theorem}[section]
\newtheorem{proposition}[theorem]{Proposition}
\newtheorem{lemma}[theorem]{Lemma}
\theoremstyle{definition}
\newtheorem{definition}[theorem]{Definition}
\newtheorem{example}[theorem]{Example}
\numberwithin{equation}{section}
\theoremstyle{definition}
\newtheorem{remark}[theorem]{Remark}
\newcommand{\Ac}{\mathcal{A}}
\newcommand{\Mc}{\mathcal{M}}
\newcommand{\Rc}{\mathcal{R}}
\newcommand{\Oc}{\mathcal{O}}
\newcommand{\Cb}{\mathbb{C}}
\newcommand{\Fb}{\mathbb{F}}
\newcommand{\Pb}{\mathbb{P}}
\newcommand{\Rb}{\mathbb{R}}
\newcommand{\Zb}{\mathbb{Z}}
\newcommand{\Z}{\mathbb{Z}}
\newcommand{\xdownarrow}[1]{%
  {\left\downarrow\vbox to #1{}\right.\kern-\nulldelimiterspace}
} 
\newcommand{\beq}{\begin{eqnarray}}
\newcommand{\eeq}{\end{eqnarray}}
\newcommand{\para}[1]{\medskip\noindent\textbf{#1.}}
\DeclareMathOperator{\Aut}{Aut}
\DeclareMathOperator{\GL}{GL}
\DeclareMathOperator{\Hom}{Hom}
\DeclareMathOperator{\Sp}{Sp}
\DeclareMathOperator{\Teich}{Teich}
\DeclareMathOperator{\Diff}{Diff}
\DeclareMathOperator{\Mod}{Mod}
\DeclareMathOperator{\J}{J}
\DeclareMathOperator{\vol}{vol}
\DeclareMathOperator{\Hol}{Hol}
\DeclareMathOperator{\Mor}{Mor}
\DeclareMathOperator{\DM}{\overline{\Mc}_g^{\rm DM}}
\title{Global rigidity of the period mapping}
\author{Benson Farb\thanks{Supported in part by National Science Foundation Grant No. DMS-181772, the Eckhardt Faculty Fund and the Jump Trading Mathlab Research Fund. }}
\begin{document}
\maketitle

%

\section{Introduction}

Let $\Mc_g$ denote the moduli space of smooth, genus $g\geq 1$ curves and let $\Ac_h$ denote the moduli space of $h$-dimensional, principally polarized abelian varieties.  
Let \[\J:\Mc_g\to\Ac_g\] denote the {\em period mapping}, assigning to a Riemann surface $X\in\Mc_g$ its Jacobian variety together with the prinicipal polarization induced from the intersection form on $H_1(X;\Zb)$.  The map $\J$ is an injective (by the Torelli Theorem) morphism of quasiprojective varieties.

Given the more than 150 years of intensive study of the period mapping, and the fundamental role it plays in the theory of Riemann surfaces, it is natural to ask:  
are there other ways to attach an $h$-dimensional principally polarized abelian variety to a smooth, genus $g$ Riemann surface in an algebraically (or even holomorphically) varying manner, not necessarily in a one-to-one fashion?   The main theorem of this paper states that, when $h\leq g$, the period mapping is the unique nontrivial way to do this, even if one is 
allowed the extra data of a finite set of marked points on the surface.  

Let $\Mc_{g,n}$ denote the moduli space of pairs $(X,(z_1,\ldots,z_n))$ with $X\in \Mc_g=\Mc_{g,0}$ and $z_i\neq z_j\in X$ when $i\neq j$.   In this case the period mapping $\J:\Mc_{g,n}\to\Ac_g$ factors through the map $\Mc_{g,n}\to\Mc_g$ given by $(X,(z_1,\ldots,z_n))\mapsto X$.
 
\begin{theorem}[{\bf Global rigidity of the period mapping}]
\label{theorem:main}
Let $g\geq 3, n\geq 0$ and assume that $h\leq g$.  Let $F:\Mc_{g,n}\to\Ac_h$ be any nonconstant holomorphic map of complex orbifolds.  Then $h=g$ and $F=\J$.
\end{theorem}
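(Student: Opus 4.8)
The plan is to combine a representation-theoretic rigidity statement for the mapping class group with an analytic rigidity statement for holomorphic (hence harmonic) maps into the non-positively curved orbifold $\Ac_h$, and first to pass to fundamental groups. Since $\Tc_{g,n}$ (Teichm\"uller space) is contractible and $\Mod_{g,n}$ acts on it properly discontinuously with finite stabilizers, $\Mc_{g,n}$ is a $K(\Mod_{g,n},1)$ orbifold, and likewise $\Ac_h=\hyp_h/\Sp_{2h}(\Zb)$ is a $K(\Sp_{2h}(\Zb),1)$ orbifold, where $\hyp_h$ denotes the Siegel upper half space. Thus $F$ induces, up to conjugacy, a homomorphism $F_\ast\colon\Mod_{g,n}\to\Sp_{2h}(\Zb)$, and $F$ is determined up to free homotopy by $F_\ast$. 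If $F_\ast$ were trivial, $F$ would lift to a holomorphic map $\widehat F\colon\Mc_{g,n}\to\hyp_h$; realizing $\hyp_h$ as a bounded domain in $\mathbb{C}^{h(h+1)/2}$ (the Siegel disc), the components of $\widehat F$ would be bounded holomorphic functions on the quasiprojective variety $\Mc_{g,n}$, hence would extend across the codimension-one boundary of the normal projective variety $\overline{\Mc}_{g,n}$ by the Riemann removable-singularity theorem, hence constant. As $F$ is nonconstant, $F_\ast$ is nontrivial.

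Next I would apply the classification of low-rank symplectic representations of mapping class groups: for $g\ge 3$, every nontrivial homomorphism $\Mod_{g,n}\to\Sp_{2h}(\Zb)$ with $h\le g$ has $h=g$, factors through the point-forgetting map $\Mod_{g,n}\to\Mod_g$, and coincides, up to conjugacy in $\Sp_{2g}(\Zb)$, with the standard symplectic representation $\J_\ast$. (This rests on $\Mod_{g,n}$ being perfect for $g\ge 3$, on the Johnson homomorphism and the structure of the Torelli group, and on the non-existence of such representations of $\Sp_{2h}(\Zb)$ for $h<g$; this is the principal group-theoretic input, and I would cite it or establish it separately.) Granting this, $h=g$; and after post-composing the tautological lift $\widetilde{\J}\colon\Tc_{g,n}\to\hyp_g$ with the (deck) action of a suitable element of $\Sp_{2g}(\Zb)$ — which alters neither $\J$ nor $\Ac_g$ — we may assume the lifts $\widetilde F$ and $\widetilde{\J}$ are equivariant for one and the same homomorphism $F_\ast=\J_\ast$. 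In particular $F$ and $\J$ are freely homotopic maps $\Mc_{g,n}\to\Ac_g$.

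Finally I would upgrade free homotopy to equality. Both $F$ and $\J$ are holomorphic maps of K\"ahler orbifolds, hence harmonic, and $\Ac_g$ has non-positive sectional curvature; so $x\mapsto d_{\hyp_g}\!\bigl(\widetilde F(x),\widetilde{\J}(x)\bigr)$ is a $\Mod_{g,n}$-invariant subharmonic function on $\Tc_{g,n}$ (for a K\"ahler metric pulled back from $\overline{\Mc}_{g,n}$), and so descends to a subharmonic function $u$ on $\Mc_{g,n}$. Since $F_\ast=\J_\ast$, the associated weight-one variations of Hodge structure on $\Mc_{g,n}$ have identical local monodromy around each boundary divisor of $\overline{\Mc}_{g,n}$; by the nilpotent-orbit (several-variable $\SL_2$-orbit) theorem the two period maps then remain within bounded Siegel distance near the boundary, so $u$ is bounded above, extends subharmonically across $\partial\overline{\Mc}_{g,n}$, and is therefore constant, $u\equiv c\ge 0$. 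If $c>0$, Hartman-type uniqueness for harmonic maps into non-positively curved targets would force the image of the equivariant map to lie in a proper totally geodesic flat of $\hyp_g$, which $\J_\ast(\Mod_{g,n})=\Sp_{2g}(\Zb)$ would have to preserve — impossible, since $\Sp_{2g}(\Zb)$ is Zariski dense in $\Sp_{2g}(\Rb)$. Hence $c=0$ and $F=\J$.

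I expect the hardest step to be this last one: controlling the behaviour of $\widetilde F$ and $\widetilde{\J}$ near $\partial\overline{\Mc}_{g,n}$ precisely enough to know that $u$ is bounded, and making the harmonic-map uniqueness argument rigorous on the non-compact orbifold $\Mc_{g,n}$; a close second is producing, rather than merely quoting, the low-rank symplectic representation classification used in the second step.
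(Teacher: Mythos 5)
Your first two moves (pass to $F_\ast\colon\Mod_{g,n}\to\Sp_{2h}(\Zb)$, rule out the trivial representation via boundedness of the Siegel disc, quote a classification of low-rank symplectic representations to reduce to $h=g$ and $F\simeq \J$) coincide with Step~1 of the paper, which invokes Korkmaz together with the Congruence Subgroup Property for $\Sp(2g,\Zb)$; you correctly flag that you are black-boxing this. After that, however, you diverge sharply. The paper does not try to compare $F$ and $\J$ globally: it invokes Borel--Narasimhan to reduce the problem to finding a \emph{single point} where $F(x)=\J(x)$, then manufactures such a point by restricting to a carefully chosen curve $C\subset\Mc_g$, upgrading the geodesic homotopy $H_t\vert_C$ to a path of \emph{morphisms} $C\to\Ac_g$ (via a Wirtinger/Eells--Sampson convexity squeeze plus Kobayashi--Ochiai and Chow), and then appealing to Saito's rigidity criterion for families of abelian varieties over curves to conclude the path is constant. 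Your route is a global Schoen--Yau/Hartman type argument: show $u=d_{\hyp_g}(\widetilde F,\widetilde{\J})$ is subharmonic, bounded, hence constant, and then rule out $u\equiv c>0$ using Zariski density of $\Sp(2g,\Zb)$.

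The place where your argument has a genuine gap is exactly where you predict: the boundedness of $u$ near $\partial\overline{\Mc}_{g,n}$. The nilpotent orbit theorem writes each lifted period map near a boundary divisor as $\Phi_i(\tau)=\exp(\tau N)\,\psi_i(q)$ with $\psi_i$ extending holomorphically to $q=0$ and $N$ the common logarithm of monodromy. But $\exp(\tau N)$ lies in $\Sp(2g,\Cb)$, not $\Sp(2g,\Rb)$, so it does not act by isometries of the Siegel metric; one cannot cancel it to conclude $d(\Phi_1,\Phi_2)=d(\psi_1,\psi_2)$ is bounded. Whether $d\bigl(\exp(\tau N)a,\exp(\tau N)b\bigr)$ stays bounded as $\mathrm{Im}(\tau)\to\infty$ for $a\ne b$ in the compact dual is a delicate asymptotic question (it fails, for instance, when $a$ and $b$ limit to different boundary components of the Satake compactification), and establishing it in the generality you need would require the full strength of the Cattani--Kaplan--Schmid several-variable $\SL_2$-orbit estimates, not just the statement of the nilpotent orbit theorem. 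The paper's author is evidently aware that homotopy plus equal monodromy does \emph{not} in general force equality of holomorphic maps into $\Ac_g$ --- the ``Cautionary example'' and the references to Faltings and Saito are there precisely to make this point --- which is why the argument is routed through Borel--Narasimhan (only one point of agreement needed) and an algebraic rigidity theorem for abelian schemes over curves rather than through analytic boundary estimates. Your concluding Hartman-type step is also under-justified: constancy of $d(\widetilde F,\widetilde{\J})\equiv c>0$ gives a parallel Jacobi field along the connecting geodesics, but it does not immediately place the image inside a \emph{totally geodesic flat} that the monodromy group must preserve; you would need a more careful argument here as well. It is worth comparing your sketch to Hain's alternative proof in \S3 of the paper, which is closest in spirit to yours --- it also exploits equal monodromy near the boundary --- but sidesteps the analytic boundedness issue entirely by working with the VHS $\Hom(V_F,V_{\J})$, the Theorem of the Fixed Part, and Schmid's rigidity theorem, thereby turning the problem into algebra.
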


\begin{remark}
Both $\Mc_{g,n}$ and $\Ac_g$ are complex orbifolds: they are quotients of contractible complex manifolds by a group of biholomorphic automorphisms acting properly discontinuously and virtually freely.  As such, in this paper holomorphic maps between them are always taken to be in the category of orbifolds.  The standard examples of maps between orbifold moduli spaces (including the period mapping $\J$) are holomorphic in this sense. See \S\ref{section:proof} for more details.
\end{remark}

I do not know if the statement of Theorem~\ref{theorem:main} holds for $g=2$. Some upper bound on $h$ in terms of $g$ is necessary for Theorem~\ref{theorem:main} to hold: 
lifting complex structures to certain characteristic covers (e.g.\ finite homology covers) gives various nonconstant holomorphic maps $\Mc_g\to \Ac_h$ with $h>g$.  The following example shows that if $\Mc_g$ is replaced by a finite cover then the conclusion of Theorem~\ref{theorem:main} no longer holds.  

\begin{example}[{\bf Prym map}]
\label{remark:Prym}
Let 
\[\Rc_g:=\{(X,\theta) : X\in \Mc_g, \ 0\neq\theta\in H^1(X;\Fb_2)\}, \]
a $(2^{2g}-1)$-sheeted cover of $\Mc_g$. Then, in addition to the composition 
$\Rc_g\to \Mc_g\stackrel{\J}{\to}\Ac_g$, there is for $g\geq 2$ a nontrivial morphism of varieties 
\[{\rm Prym}:\Rc_g\to\Ac_{g-1}\]
called the {\em Prym map};  see for example the survey \cite{F} by G. Farkas.  There are many other such examples. 
\end{example}

I believe that, with the methods of this paper, it should be possible to prove that if $F$ is any nonconstant holomorphic map $\Rc_g\to \Ac_h$ with $h\leq g-1$ then $h=g-1$ and $F={\rm Prym}$.\footnote{Since an earlier version of this paper appeared, C. Servan \cite{Se} has proven this result using the methods of this paper.}  As Hain observed to me, post-composing the period mapping with Hecke correspondences on $\Ac_g$ gives a huge number of distinct holomorphic maps from various finite covers $X$ of $\Mc_g$ to $\Ac_g$.  A classification of all such maps $X\to \Ac_g$ seems a worthwhile challenge.

\para{Proof outline} 
The proof of Theorem~\ref{theorem:main} is divided into six steps, outlined as follows. As far as I know, this approach to proving uniqueness of holomorphic maps in a homotopy class - by using a convexity argument along Jacobi fields to 
convert a homotopy to an algebraic deformation, and play this off against rigidity for certain families -  is novel.   
\begin{enumerate}
\item A theorem of Korkmaz \cite{K} and the Congruence Subgroup Property classify 
representations $\pi^{\rm orb}_1(\Mc_{g,n})\to \pi^{\rm orb}_1(\Ac_h)$. The fact that $\Ac_h$ is aspherical quickly reduces the theorem to the case $h=g$ and $F$ homotopic to $\J$. 

\item Since $\Ac_g$ is covered by a bounded symmetric domain in $\Cb^{\binom{g+1}{2}}$, a theorem of Borel-Narasimhan \cite{BN} gives $F=\J$ as long as $F(x)=\J(x)$ for some $x\in\Mc_g$. The rest of the proof of Theorem~\ref{theorem:main} is devoted to finding such an $x$.

\item For any curve $C\subset\Mc_g$, we replace the homotopy $\J|_C\sim F|_C$ by a geodesic homotopy $H_t$.  We then follow an argument of Antonakoudis-Aramayona-Souto \cite{AAS}.  Using the fact that $\Ac_g$ has nonpositive sectional curvature, we 
deduce convexity of the energy of $H_t$ along a Jacobi field.  We apply this to a Wirtinger-type inequality, used as a kind of ``holomorphicity detector'', to prove that each $H_t:C\to \Ac_g$ is holomorphic. 

\item We apply a theorem of Kobayashi-Ochiai \cite{KO} to extend each $H_t$ to a map $\overline{C}\to\overline{\Ac_g}^{\rm S}$ to the Satake compactification of $\Ac_g$. Chow's Theorem gives that each $H_t$ is a {\em morphism} of varieties.  This improvement is needed to apply the rigidity theory of Faltings and Saito for families of abelian varieties.

\item We find a curve $C\subset\Mc_g$ that is {\em $\Ac_g$-rigid}: it is an isolated point in the space $\Mor(C,\Ac_g)$ of morphisms $C\to\Ac_g$.  The construction is non-explicit, and uses $g\geq 3$.  It produces a family of abelian varieties over $C$ with monodromy $\Sp(2g,\Zb)$, and another property, so that a rigidity criterion of Saito \cite{S} (building on Faltings) can be applied.  

\item Steps 4 and 5 give that the path $t\mapsto H_t$ in $\Mor(C,\Ac_g)$ is constant, from which it follows that $F(x)=\J(x)$ for all $x\in C$.  We have thus found the required (by Step 2) $x$, completing the proof of the theorem.
\end{enumerate}

An alternative approach to proving Step 3, due to R. Hain, is given in \S\ref{section:Hodge}. 

\para{Related work} There are various powerful theorems of Faltings, Noguchi and others 
\footnote{For a recent example, see Theorem 1.7 of Javanpeykar-Litt \cite{JL}.} 
giving the finiteness 
of various spaces of holomorphic maps between (typically compact) complex spaces and certain Kobayashi hyperbolic spaces such as $\Ac_g$; see for example the survey \cite{No} and \cite{JL}.  This contrasts with Theorem~\ref{theorem:main} above, where this finite set is shown to have a single element.  This is similar to the difference between local rigidity for cocompact, irreducible lattices in semisimple Lie groups, proved by Calabi-Vasenti-Weil, and the global rigidity later proved by Mostow.  It would be interesting to see if any of the methods in the present paper could be used to address any of the conjectures in \cite{No}.

\para{Acknowledgements}  I would like to thank Juan Souto for patiently answering my questions about \cite{AAS}; Sam Grushevsky for suggesting 
Example~\ref{remark:Prym}; Nir Gadish, Ariyan Javapeykar, Eduard Looijenga and Dan Margalit for useful comments and corrections on an earlier draft; Richard Hain for suggesting the comment above as well an alternative proof of Step 3; and Curtis McMullen for various inspiring conversations, and for useful comments on an earlier draft of this paper.  Finally, it is a pleasure to thank the anonymous referee for several useful suggestions.

\section{Proof of Theorem~\ref{theorem:main}}
\label{section:proof}

The purpose of this section is to prove Theorem~\ref{theorem:main}.  

\begin{remark}[{\bf Maps of orbifolds}]
A topological (resp.\ complex) {\em orbifold} is the quotient $X/\Gamma$ of a manifold (resp.\ complex manifold) $X$ by a group $\Gamma$ acting properly discontinuously on $X$ by homeomorphisms (resp.\ biholomorphic automorphisms).  Let $Y/\Lambda$ be another orbifold, and let $\rho:\Gamma\to\Lambda$ be a homomorphism.  A continuous (resp.\ holomorphic) {\em map in the category of orbifolds}  $F:X/\Gamma\to Y/\Lambda$ is by definition a continuous (resp.\ holomorphic) map $\widetilde{F}:X\to Y$ that intertwines $\rho$: 
\[\widetilde{F}(g\cdot x)=\rho(g)(\widetilde{F}(x)) \ \ \ \text{for all\ } x\in X, g\in \Gamma.\]
When this is the case we use the shorthand $F_*:\Gamma\to\Lambda$.   Note that this homomorphism is only defined up to postcomposition by an inner automorphism of $\Lambda$.  Henceforth all maps between orbifolds will be assumed to be maps in the category of orbifolds.
\end{remark}

We now prove Theorem~\ref{theorem:main} in 6 steps.

\subsection*{Step 1: Homotopy classes of maps $\Mc_{g,n}\to\Ac_h$}

The main goal of this step is the following.

\begin{proposition}
\label{proposition:homotopy1}
Let $g\geq 3, n\geq 0, h\geq 1$.  Assume that $h\leq g$.  Let $F:\Mc_{g,n}\to\Ac_h$ be any continuous map (in the category of orbifolds - see below).  Then either $F$ is homotopically trivial or $h=g$ and $F$ is homotopic to the period mapping $\J$ (and so in particular if $n>0$ then $F$ factors through the forgetful map $\Mc_{g,n}\to\Mc_g$).
\end{proposition}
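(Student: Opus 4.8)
The plan is to reduce the classification of homotopy classes of maps $\Mc_{g,n}\to\Ac_h$ to a purely group-theoretic statement about homomorphisms $\pi_1^{\rm orb}(\Mc_{g,n})\to\pi_1^{\rm orb}(\Ac_h)$, and then invoke the structure theory of these groups. Since $\Ac_h$ is aspherical (it is a $K(\pi,1)$ in the orbifold sense, being the quotient of the Siegel upper half-space $\mathfrak{H}_h$ — a contractible bounded symmetric domain — by $\Sp(2h,\Zb)$), any continuous map $F:\Mc_{g,n}\to\Ac_h$ is determined up to homotopy by the induced homomorphism $F_*:\pi_1^{\rm orb}(\Mc_{g,n})\to\pi_1^{\rm orb}(\Ac_h)=\Sp(2h,\Zb)$, up to conjugacy in the target. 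So the whole problem becomes: classify all homomorphisms $\Mod_{g,n}\to\Sp(2h,\Zb)$ up to conjugacy, where $\Mod_{g,n}=\pi_1^{\rm orb}(\Mc_{g,n})$ is the mapping class group.

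First I would handle the case $n=0$ and then bootstrap to $n>0$. For $n=0$: the mapping class group $\Mod_g$ surjects onto $\Sp(2g,\Zb)$ via the symplectic representation on $H_1$; composing with the standard block inclusion $\Sp(2g,\Zb)\hookrightarrow\Sp(2h',\Zb)$ cannot occur unless $h'\geq g$, and we are restricted to $h\leq g$. The key input is Korkmaz's theorem \cite{K} (together with work of others) classifying homomorphisms from the mapping class group to linear groups / computing its "low-dimensional" linear representations: for $g\geq 3$, any homomorphism $\Mod_g\to\Sp(2h,\Zb)$ with $h\leq g$ is, up to conjugacy, either trivial or (when $h=g$) the standard symplectic representation. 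The relevant point is that $\Mod_g$ is generated by Dehn twists, has finite abelianization for $g\geq 3$ (indeed is perfect), so any homomorphism to an abelian quotient dies, and any nontrivial homomorphism must send a Dehn twist to a nontrivial element; Korkmaz's analysis pins down the possibilities. Then the Congruence Subgroup Property for $\Sp(2g,\Zb)$ ensures there are no "exotic" homomorphisms lurking in congruence or other finite-index issues when one passes between the arithmetic group and its finite-index subgroups (relevant because orbifold fundamental groups are only defined up to the virtually-free action, and $F_*$ is only defined up to inner automorphism and on a finite-index subgroup). Once $F_*$ is identified with the symplectic representation, asphericity of $\Ac_g$ forces $F\simeq\J$ since $\J_*$ is exactly this representation.

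For $n>0$, I would use the Birman exact sequence $1\to\pi_1(\mathrm{PConf}_n(X))\to\Mod_{g,n}\to\Mod_g\to 1$ (or its orbifold variant); the point is that the point-pushing/surface-braid subgroup $\pi_1(\mathrm{PConf}_n(X))$ is generated by elements that map to trivial elements under any homomorphism $\Mod_{g,n}\to\Sp(2h,\Zb)$ with $h\le g$ — essentially because these are commutators of Dehn twists, or because one can again quote Korkmaz's result directly for $\Mod_{g,n}$. Hence $F_*$ factors through $\Mod_g$, so $F$ factors up to homotopy through the forgetful map $\Mc_{g,n}\to\Mc_g$, and the $n=0$ case applies. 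This gives the parenthetical conclusion about factoring through $\Mc_{g,n}\to\Mc_g$.

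The main obstacle — or rather, the place where all the real content is imported — is the group-theoretic classification in Step 1's core: showing that the \emph{only} homomorphisms $\Mod_{g,n}\to\Sp(2h,\Zb)$ for $h\le g$, $g\ge 3$ are the trivial one and (for $h=g$) the symplectic representation. This is exactly where the hypothesis $g\geq 3$ is used (for $g=2$, $\Mod_2$ has extra structure and its linear representation theory is genuinely different), and it is the reason the argument relies essentially on \cite{K} and the Congruence Subgroup Property rather than on anything elementary. Everything else — asphericity giving the bijection between homotopy classes and conjugacy classes of homomorphisms, and the $n>0$ bootstrap — is formal once that classification is in hand.
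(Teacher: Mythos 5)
Your proposal follows essentially the same route as the paper: reduce to classifying homomorphisms $\Mod(S_{g,n})\to\Sp(2h,\Zb)$ using asphericity of $\Ac_h$, invoke Korkmaz's theorem for $g\geq 3$, use the Congruence Subgroup Property, and conclude. Two small points of comparison are worth noting. First, your Birman-exact-sequence bootstrap for $n>0$ is unnecessary: Korkmaz's Theorem~1 is stated directly for $\Mod(S_{g,n})$ with punctures, so the paper simply applies it verbatim and reads off that $F_*$ factors through the symplectic representation (and hence through $\Mc_{g,n}\to\Mc_g$). Your ``commutators of Dehn twists'' remark for the point-pushing subgroup is also not quite precise as stated, but since you fall back on quoting Korkmaz for $\Mod_{g,n}$, no harm is done. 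Second, and more substantively, you are somewhat vague about exactly what conjugacy Korkmaz's theorem delivers and what CSP is needed for. Korkmaz gives only that $F_*(x)=A\rho(x)A^{-1}$ for some $A\in\GL(2g,\Cb)$, i.e.\ conjugacy over $\Cb$; to identify $F$ with $\J$ up to homotopy as orbifold maps, one needs the conjugating element to lie in $\Sp(2g,\Zb)$, since homotopy classes correspond to homomorphisms up to \emph{inner} automorphism of the target. The paper closes this gap by (a) citing Step~5 of Korkmaz's proof to factor $F_*$ as $\tau\circ\rho$ with $\tau:\Sp(2g,\Zb)\to\Sp(2g,\Zb)$, and then (b) using CSP (Mennicke) to show any such $\tau$ with infinite image is conjugation by an element of $\Sp(2g,\Zb)$. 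Your description of CSP as ruling out ``exotic homomorphisms lurking in finite-index issues'' gestures at the right ingredient but misses that its precise job here is to upgrade $\GL(2g,\Cb)$-conjugacy to $\Sp(2g,\Zb)$-conjugacy. With that clarified, the argument is correct and matches the paper's.
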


The proof of Proposition~\ref{proposition:homotopy1} uses in a crucial way a result of Korkmaz (\cite{K}, Theorem 1) classifying low-dimensional representations of $\Mod(S_{g,n})$.

\begin{proof}[{\bf Proof of Proposition~\ref{proposition:homotopy1}}]
For now fix $g\geq 1, n\geq 0$.   Let $\Mod(S_{g,n}):=\pi_0(\Diff^+(S_g; z_1,\ldots ,z_n))$ be the {\em mapping class group}  of a smooth, oriented genus $g$ surface $S_g$ fixing $n$ distinct points on $S_g$, and let $\Sp(2g,\Zb)$ denote the integral symplectic group.  Let $\Teich(S_{g,n})$ be the Teichm\"{u}ller space of isotopy classes of complex structures on a smooth, genus $g$ surface $S_g$ with $n$ marked points.  Let ${\mathfrak h}_g$ denote the Siegel upper half-space.  The complex manifold $\Teich(S_{g,n})$ (resp.\ ${\mathfrak h}_g$) is known to be a bounded domain in $\Cb^N$ for $N=3g-3+n$ (resp.\ $N=\binom{g+1}{2}$).   

The group $\Mod(S_{g,n})$ acts on $\Teich(S_{g,n})$ properly discontinuously by biholomorphic automorphisms, and the quotient $\Mc_{g,n}:=\Teich(S_{g,n})/\Mod(S_{g,n})$ is the moduli space of genus $g$ Riemann surfaces with $n$ marked (and ordered) points.  Similarly, the integral symplectic group $\Sp(2g,\Zb)$ acts properly discontinuously by biholomorphic automorphisms on Siegel upper half-space ${\mathfrak h}_g$, a bounded domain in $\Cb^{\binom{g+1}{2}}$.   The quotient $\Ac_g:={\mathfrak h}_g/\Sp(2g,\Zb)$ is the moduli space of principally polarized abelian varieties.  

The moduli spaces $\Mc_{g,n}$ and $\Ac_g$ are quasiprojective varieties, and the period mapping $\J:\Mc_{g,n}\to\Ac_g$ (as discussed in the introduction) is a morphism of complex varieties.   Let 
\[\rho:\Mod(S_{g,n})\to \Aut(H_1(S_g;\Zb),\hat{i})=\Sp(2g,\Zb)\]
be the {\em symplectic representation}; here $\hat{i}$ denotes the algebraic intersection form on $H_1(S_g;\Zb)$.  The morphism $\J$ lifts to a $\rho$-equivariant morphism $\widetilde{\J}:\Teich(S_g)\to{\mathfrak h}_g$, giving a commutative diagram  
  \[
\begin{array}{ccc}
\Teich(S_{g,n})&\stackrel{\widetilde{\J}}{\longrightarrow} & {\mathfrak h}_g\\
\Big\downarrow&&\Big\downarrow \\
\Mc_{g,n}&\stackrel{\J}{\longrightarrow}&\Ac_g
\end{array}
\]

Thus $\J$ is a holomorphic map (indeed morphism) in the category of complex orbifolds, with $\J_*=\rho$.    
  
As explained above, $F$ induces a homomorphism $F_*:\Mod(S_{g,n})\to \Sp(2h,\Zb)$.  Since $g\geq 3$, Theorem 1 of Korkmaz \cite{K} gives that either $F_*=0$ (the trivial homomorphism) or $h=g$ and $F_*(x)=A\rho(x)A^{-1}$ for some $A\in\GL(2g,\Cb)$.   
Assume the latter case.  As stated in Step 5 of the proof of Korkmaz's theorem, any nontrivial homomorphism $\Mod(S_{g,n})\to \Sp(2g,\Zb)$ with $g\geq 4$ factors through the standard symplectic representation $\rho$, inducing a representation $\tau:\Sp(2g,\Zb)\to\Sp(2g,\Zb)$ with $F_*=\tau\circ\rho$.   By the Congruence Subgroup Property for $\Sp(2g,\Zb)$ (see \cite{Me}, {\it Corollar 1} on p.128), any homomorphism $\tau$ with infinite image (as in our case, since $F_*(x)=A\rho(x)A^{-1}$) is an automorphism and is given by conjugation by some element of $\Sp(2g,\Zb)$.  It follows that $F_*:\Mod(S_{g,n})\to\Sp(2g,\Zb)$ is, after conjugation by some $A\in \Sp(2g,\Zb)$, equal to the symplectic representation $\rho$.  

Since ${\mathfrak h}_h$ is contractible, there is a bijection between continuous maps (as always, in the category of orbifolds) $F:\Mc_{g,n}\to\Ac_h$ and 
pairs $([\widetilde{F}],\tau)$ where $\tau:\Mod(S_{g,n})\to\Sp(2h,\Zb)$ is a homomorphism and $[\widetilde{F}]$ denotes equivariant homotopy classes of continuous maps $\widetilde{F}:\Teich(S_{g,n})\to{\mathfrak h}_h$ intertwining $\rho$.  Thus if $F_*=0$ then $F$ is freely homotopic to a constant map, and if $F_*=\rho$ then $F$ is freely homotopic to the period mapping $\J$.  
\end{proof}

Before continuing we dispense with the case where $F$ is homotopically trivial.

\begin{lemma}
\label{lemma:htrivial}
Let $g\geq 3, n\geq 0, h\geq 1$.  Let $F:\Mc_{g,n}\to\Ac_h$ be any holomorphic map of orbifolds.  If $F$ is homotopically trivial then $F$ is constant.
\end{lemma}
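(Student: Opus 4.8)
The plan is to exploit the fact that the domain $\Mc_{g,n}$ is a quotient of the bounded domain $\Teich(S_{g,n})\subset\Cb^{3g-3+n}$, hence carries a large supply of bounded holomorphic functions and is Kobayashi/Brody hyperbolic, while a homotopically trivial holomorphic map between orbifolds lifts to an \emph{honest} holomorphic map on the universal covers. Concretely: since $F$ is homotopically trivial, Proposition~\ref{proposition:homotopy1} (or rather its proof) gives that the induced map $F_*:\Mod(S_{g,n})\to\Sp(2h,\Zb)$ is trivial, so the equivariant lift $\widetilde F:\Teich(S_{g,n})\to\mathfrak h_h$ intertwines the trivial representation, i.e.\ $\widetilde F$ is $\Mod(S_{g,n})$-invariant. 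Therefore $\widetilde F$ descends to a holomorphic map $\Mc_{g,n}=\Teich(S_{g,n})/\Mod(S_{g,n})\to\mathfrak h_h$, and post-composing with the covering $\mathfrak h_h\to\Ac_h$ recovers $F$. So it suffices to show that \emph{every} holomorphic map $\Mc_{g,n}\to\mathfrak h_h$ is constant.

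For that I would use that $\mathfrak h_h$ is biholomorphic to a bounded domain in $\Cb^{\binom{h+1}{2}}$, so a holomorphic map $\Mc_{g,n}\to\mathfrak h_h$ is the same as a bounded-image holomorphic map, i.e.\ a tuple of bounded holomorphic functions on $\Mc_{g,n}$. The cleanest route is hyperbolicity: $\Mc_{g,n}$ admits a finite-index manifold cover $\Mc_{g,n}[\ell]$ (a level structure), which is a smooth quasiprojective variety that is Kobayashi hyperbolic (it embeds in $\Ac_g[\ell]$ for $n=0$, or one uses Royden's theorem that the Teichm\"uller metric is the Kobayashi metric and descends); pulling back, the composite $\Mc_{g,n}[\ell]\to\Mc_{g,n}\to\mathfrak h_h$ is a holomorphic map from a hyperbolic manifold into a bounded domain. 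Since $\mathfrak h_h$ with its Bergman (Kobayashi) metric is a complete hyperbolic manifold of negative curvature, any holomorphic map from a compact-type situation would be constant — but $\Mc_{g,n}$ is not compact, so I instead argue via the fact that a bounded holomorphic function on $\Teich(S_{g,n})$ that is $\Mod(S_{g,n})$-invariant extends (by Kobayashi–Ochiai, or by Grauert–Remmert on the Deligne–Mumford compactification, exactly the kind of extension invoked in Step 4) to a holomorphic function on the projective compactification $\overline{\Mc_{g,n}}$, hence is constant by the maximum principle.

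The key steps, in order: (1) translate "homotopically trivial" into "$F_*$ trivial" and conclude the lift $\widetilde F$ is $\Mod$-invariant, so $F$ itself lifts through $\Mc_{g,n}\to\mathfrak h_h$; (2) realize such a lift as a tuple of bounded holomorphic functions on $\Mc_{g,n}$, using that $\mathfrak h_h$ is a bounded domain; (3) extend these functions across the boundary of a projective compactification of $\Mc_{g,n}$ (equivalently of a finite manifold cover) — either via Kobayashi–Ochiai applied to the hyperbolic target, or via normality of $\overline{\Mc_{g,n}}$ together with boundedness — and invoke the maximum principle to force constancy. I expect step (3) to be the main obstacle: one must be careful that the complement of $\Mc_{g,n}$ in a good compactification has codimension behaving well (it is a normal crossings divisor in $\overline{\Mc_{g,n}}^{\mathrm{DM}}$, so Riemann-type extension for bounded holomorphic functions applies), and that passing to a level cover and back does not lose the conclusion — but since constancy is detected already on a finite cover, and the boundary extension is exactly of the flavour already used in Step~4 of the main proof, this should go through cleanly.
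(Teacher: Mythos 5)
Your proposal follows essentially the same line as the paper: lift $F$ (via $F_*=0$) to a holomorphic map $\widetilde F:\Mc_{g,n}\to\mathfrak h_h$ into a bounded domain, view its coordinates as bounded holomorphic functions on a quasiprojective variety, and conclude constancy. The one difference is the last step: the paper simply cites Borel--Narasimhan \cite{BN}, \S 1.2(a), for the fact that a bounded holomorphic function on a smooth quasiprojective variety is constant, whereas you re-derive it by passing to a level cover $\Mc_{g,n}[\ell]$, extending each bounded coordinate function across the normal-crossings boundary of the Deligne--Mumford compactification (Riemann extension), and invoking the maximum principle on the resulting compact manifold. That is in fact the standard proof of the cited fact, so the arguments converge. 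Two minor remarks: the excursion into Kobayashi hyperbolicity of $\Mc_{g,n}$ is a red herring and plays no role in what you actually use; and the passage to a level cover, while not how the paper phrases it, is a reasonable way to sidestep the non-smoothness of the coarse moduli space before applying Riemann extension.
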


\begin{proof}
Since $F$ is homotopically trivial it lifts to a map $\widetilde{F}:\Mc_{g,n}\to {\mathfrak h}_h$, a bounded domain in $\Cb^N, N:=\binom{h+1}{2}$.  Let $\widetilde{F}_i:\Mc_{g,n}\to\Cb$ be the composition of $\widetilde{F}$ with the coordinate function 
$z_i$ on $\Cb^N$.  So $\widetilde{F}_i$ is a bounded holomorphic function on a smooth, quasiprojective variety, hence is constant (see, e.g.\ \cite{BN}, \S 1.2 (a)).  Since this is true for each $i$ it follows that $\widetilde{F}$ is constant, hence $F$ is constant.
\end{proof}

Proposition~\ref{proposition:homotopy1} and Lemma~\ref{lemma:htrivial} reduce the proof of Theorem~\ref{theorem:main} to the case when $h=g$, when $F$ factors through the forgetful morphism $\Mc_{g,n}\to\Mc_g$, and the resulting map $\Mc_g\to\Ac_g$ is homotopic to $\J$. In particular the statement of the theorem for $\Mc_{g,n}$ follows from that for $\Mc_g$.  It thus suffices to assume the following.

\begin{center}
{\it We henceforth assume that $h=g, n=0$ and that $F$ is homotopic to $\J$.} 
\end{center}

\subsection*{Step 2: The Borel-Narasimhan Theorem}

Homotopic holomorphic maps are not always equal, even for nonpositively curved Kahler targets.  Here is a simple example (there more serious examples, even with $\Ac_g$ target, due to Faltings and others; see \cite{S}.)

\begin{example}[{\bf Cautionary example}]
Let $X$ and $Y$ be connected, complex manifolds. For each fixed $x\in X$ let $F_x:Y\to X\times Y$ 
be defined by $F_x(y):=(x,y)$.   Then each $F_x$ is holomorphic, all the $F_x$ are homotopic to each other, and all the $F_x$ are distinct from each other.  Note that $F_{x_1}(Y)\cap F_{x_2}(Y)=\emptyset$ when $x_1\neq x_2$.
\end{example}

There is a sufficient criterion, proved by Borel-Narasimhan \cite{BN}, for homotopic holomorphic maps to be equal.  Recall that a function $\sigma:X\to [-\infty,\infty)$ on a complex manifold $X$ is {\em plurisubharmonic} (also called ``pseudo-convex'' in \cite{BN}) if it is upper semi-continuous, and if for every domain $\Omega\subseteq\Cb$ and every holomorphic map $\psi:\Omega\to X$, the function $\sigma\circ\psi$ is subharmonic on $\Omega$.  

\begin{theorem}[Borel-Narasimhan \cite{BN}, Theorem 3.6]
\label{theorem:BN3.6}
Let $X$ be a connected complex manifold that carries no non-constant plurisubharmonic 
function that is bounded above. Let $a\in X$ and let $Y$ be a complex manifold covered by a bounded domain in $\Cb^N$.  Let $u,v:X\to Y$ be two holomorphic maps such that $u(a)=v(a)$  and such that $u_*=v_*:\pi_1(X,a)\to \pi_1(M,u(a))$.  Then $u=v$.
\end{theorem}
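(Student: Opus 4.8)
The plan is to lift everything to universal covers, reducing the statement to: two equivariant holomorphic maps $\widetilde X\to\Omega$ into the bounded domain $\Omega$ that agree at one point must be equal; and then to extract from the ``gap'' between these two maps a single plurisubharmonic function on $X$, to which the hypothesis on $X$ applies. Concretely, write $Y=\Omega/\Gamma$, where $\Omega\subset\Cb^N$ is bounded and $\Gamma$ is the group of deck transformations of the given covering $\Omega\to Y$, so that $\Gamma\subseteq\Aut(\Omega)$. The universal cover $\widetilde X\to X$ is simply connected, so $u$ and $v$ lift to holomorphic maps $\widetilde u,\widetilde v\colon\widetilde X\to\Omega$; fixing a lift $\widetilde a$ of $a$ and using $u(a)=v(a)$, we may take $\widetilde u(\widetilde a)=\widetilde v(\widetilde a)$. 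Since $u_*=v_*$, both $\widetilde u$ and $\widetilde v$ are equivariant with respect to the same homomorphism $\rho\colon\pi_1(X,a)\to\Gamma$. It is enough to prove $\widetilde u=\widetilde v$.

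The crux is the construction of a suitable function on $\Omega\times\Omega$. Let $\|\cdot\|$ denote the Euclidean norm on $\Cb^N$, set
\[
\phi_0(p,q):=\sup_{g\in\Aut(\Omega)}\ \|g(p)-g(q)\|\qquad(p,q\in\Omega),
\]
and let $\Phi:=\phi_0^{\ast}$ be the upper semicontinuous regularization of $\phi_0$. I would check the following properties of $\Phi$. (i) $\Phi$ is plurisubharmonic on $\Omega\times\Omega$: each $(p,q)\mapsto g(p)-g(q)$ is holomorphic $\Omega\times\Omega\to\Cb^N$, and $\|\cdot\|$ is convex, hence plurisubharmonic, on $\Cb^N$, so each member of the supremum is plurisubharmonic; since $\phi_0\le\operatorname{diam}(\Omega)<\infty$ is locally bounded above, its upper semicontinuous regularization $\Phi$ is plurisubharmonic. (ii) $\Phi$ is bounded above, by $\operatorname{diam}(\Omega)$ — the first place boundedness of $\Omega$ is used. (iii) $\Phi$ is invariant under the diagonal action of $\Aut(\Omega)$ on $\Omega\times\Omega$, hence in particular under $\rho(\pi_1(X,a))$; this is immediate for $\phi_0$ and passes to $\phi_0^{\ast}$ because elements of $\Aut(\Omega)$ are homeomorphisms. (iv) Taking $g=\operatorname{id}$ gives $\Phi(p,q)\ge\|p-q\|$, so $\Phi(p,q)=0$ if and only if $p=q$. (v) $\Phi$ vanishes on the diagonal: for each compact $K\subset\Omega$ the quantity $C_K:=\sup_{g\in\Aut(\Omega)}\sup_{x\in K}\|(Dg)_x\|$ is finite, since otherwise a Montel normal-families argument yields $g_n\to h$ locally uniformly with $\|(Dg_n)_{x_n}\|\to\|(Dh)_{x_0}\|<\infty$, a contradiction; thus $\phi_0(p,q)\le C_K\,\|p-q\|$ for $p,q$ close in $K$, and so $\Phi(p,p)=0$ for every $p$.

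Granting (i)--(v) the theorem follows quickly. The composite $\Psi:=\Phi\circ(\widetilde u,\widetilde v)\colon\widetilde X\to[0,\infty)$ is a holomorphic pullback of the plurisubharmonic function $\Phi$, hence plurisubharmonic, and is bounded above by (ii). The $\rho$-equivariance of $(\widetilde u,\widetilde v)$ together with (iii) shows that $\Psi$ is invariant under the deck group of $\widetilde X\to X$, so it descends to a plurisubharmonic, bounded-above function $\overline\Psi$ on $X$. By hypothesis $\overline\Psi$ is constant; its value at $a$ is $\Phi(\widetilde u(\widetilde a),\widetilde v(\widetilde a))$, which equals $0$ by (v) since $\widetilde u(\widetilde a)=\widetilde v(\widetilde a)$. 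Hence $\Phi(\widetilde u(\widetilde x),\widetilde v(\widetilde x))=0$ for all $\widetilde x\in\widetilde X$, and (iv) forces $\widetilde u=\widetilde v$; therefore $u=v$.

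The main obstacle is really the design of $\Phi$: one wants a function on $\Omega\times\Omega$ that is simultaneously plurisubharmonic, bounded above, $\Aut(\Omega)$-invariant, and zero precisely on the diagonal, and the obvious candidates built from intrinsic biholomorphic invariants do not work. For instance, the square of the pseudohyperbolic distance on $\Delta\times\Delta$ is not plurisubharmonic, and $-\log\!\big(1-(\text{pseudohyperbolic distance})^2\big)$, although plurisubharmonic and $\Aut$-invariant, is unbounded near the boundary. The supremum-over-$\Aut(\Omega)$ construction is precisely what reconciles plurisubharmonicity with boundedness, and boundedness of $\Omega$ enters twice: through $\operatorname{diam}(\Omega)<\infty$ in (ii), and through the uniform derivative bound $C_K<\infty$ in (v). One could equally run the construction with the supremum taken only over the subgroup $\rho(\pi_1(X,a))$ of $\Aut(\Omega)$.
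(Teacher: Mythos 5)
The paper states Theorem~\ref{theorem:BN3.6} as a citation to Borel--Narasimhan and does not reproduce a proof, so there is no in-paper argument to compare against; your task here is really to recover the argument of \cite{BN} itself. Your proposal is correct and, as far as I can tell, follows the same strategy as Borel and Narasimhan: lift to the universal cover, build from the two lifts a bounded plurisubharmonic function on $X$ measuring their ``gap,'' and invoke the hypothesis that such functions are constant. The construction $\phi_0(p,q)=\sup_{g}\|g(p)-g(q)\|$, its upper semicontinuous regularization, and the five properties you list are all fine. A few small remarks. First, your Montel/Cauchy-estimate argument in (v) in fact shows $\phi_0$ is locally Lipschitz (since $|\phi_0(p,q)-\phi_0(p',q')|\le C_K(\|p-p'\|+\|q-q'\|)$), hence continuous, so $\Phi=\phi_0$ and the regularization step is unnecessary; this is worth noting because it also removes any worry about whether $\Phi$ (as opposed to $\phi_0$) vanishes on the diagonal. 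Second, in asserting that $\Psi=\Phi\circ(\widetilde u,\widetilde v)$ is plurisubharmonic you are implicitly using that the pullback of a PSH function by a holomorphic map is PSH provided it is not identically $-\infty$; that is automatic here since $\Phi\ge 0$, but a word to that effect would be cleaner. Third, as you observe at the end, it suffices to take the supremum over the image of $\rho$ (the deck group of $\Omega\to Y$), which is the minimal invariance needed for the descent to $X$ and is, I believe, what \cite{BN} actually do; taking the supremum over all of $\Aut(\Omega)$ is harmless but not needed. None of these affects correctness.
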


Since $\Mc_g$ is a complex quasiprojective variety, Proposition 2.1 of \cite{BN} implies that any bounded plurisubharmonic function on $\Mc_g$ is constant.  Also, $\Ac_g$ has universal cover ${\mathfrak h}_g$, which is a bounded domain in $\Cb^{\binom{g+1}{2}}$.  We can thus apply Theorem~\ref{theorem:BN3.6} with $X=\Mc_g, Y=\Ac_g, u=F$ and $v=\J$.  Since $F$ and $\J$ are homotopic and holomorphic, we conclude the following.

\begin{lemma}
\label{lemma:BNconsequence}
To conclude that $F=\J$ it is enough to find some $x\in \Mc_g$ such that $F(x)=\J(x)$.
\end{lemma}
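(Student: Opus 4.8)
The plan is to deduce Lemma~\ref{lemma:BNconsequence} directly from the Borel--Narasimhan theorem (Theorem~\ref{theorem:BN3.6}), applied with $X=\Mc_g$, $Y=\Ac_g$, $u=F$ and $v=\J$. Under the standing assumptions ($h=g$, $n=0$, and $F$ homotopic to $\J$) the only missing input for that theorem is a single point of agreement, and this is precisely what the lemma hypothesizes: suppose $x\in\Mc_g$ satisfies $F(x)=\J(x)=:a$. I would then check the three hypotheses of Theorem~\ref{theorem:BN3.6} in turn.

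For the source condition: $\Mc_g$ is a smooth quasiprojective variety (as an orbifold), so by Proposition 2.1 of \cite{BN} every plurisubharmonic function on $\Mc_g$ that is bounded above is constant; thus $X=\Mc_g$ carries no nonconstant bounded-above plurisubharmonic function. For the target condition: $\Ac_g$ has universal cover the Siegel upper half-space $\mathfrak{h}_g$, a bounded domain in $\Cb^{\binom{g+1}{2}}$, so $Y=\Ac_g$ is of the required type. For the $\pi_1$-condition: since $F$ is homotopic to $\J$, the Step~1 analysis shows that on orbifold fundamental groups $F_*=\J_*=\rho$, the symplectic representation, so these are two homomorphisms $\pi_1^{\rm orb}(\Mc_g,x)\to\pi_1^{\rm orb}(\Ac_g,a)$ with the same source and target that agree as maps. (If one prefers to argue via an explicit free homotopy $H_t$ from $F$ to $\J$, the discrepancy between $F_*$ and $\J_*$ is conjugation by the loop $t\mapsto H_t(x)$ based at $a$; this conjugation is trivial because $\rho$ surjects onto $\Sp(2g,\Zb)$ and the remaining ambiguity in identifying $\pi_1^{\rm orb}(\Ac_g,a)$ with $\Sp(2g,\Zb)$ is only by the center $\{\pm I\}$, which acts trivially on $\mathfrak{h}_g$.)

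With the three hypotheses in hand, Theorem~\ref{theorem:BN3.6} gives $u=v$, i.e.\ $F=\J$, which is exactly the assertion of the lemma. I do not expect a genuine obstacle here: the analytic content is entirely contained in the cited Borel--Narasimhan theorem together with the quasiprojectivity of $\Mc_g$, and the only point requiring any care is the basepoint/orbifold-$\pi_1$ bookkeeping in the third hypothesis, where the potential ambiguity is killed by the surjectivity of the symplectic representation. The substantive work -- actually producing a point $x$ with $F(x)=\J(x)$ -- is postponed to Steps 3--6.
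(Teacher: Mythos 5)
Your proposal is exactly the paper's proof: apply Borel--Narasimhan (Theorem~\ref{theorem:BN3.6}) with $X=\Mc_g$, $Y=\Ac_g$, $u=F$, $v=\J$, using Proposition~2.1 of \cite{BN} for the plurisubharmonicity hypothesis and the fact that $\Ac_g$ is covered by the bounded domain ${\mathfrak h}_g$ for the target hypothesis. The one place you go beyond the paper is the parenthetical attempt to verify the $\pi_1$-hypothesis at the level of based loops; there the claim that the conjugating loop $[\gamma]=[t\mapsto H_t(x)]$ must lie in the center $\{\pm I\}$ does not actually follow from surjectivity of $\rho$ as written (knowing $F_*$ and $\J_*$ are each conjugate to $\rho$ does not force the conjugator to be central), but the paper itself does not address this bookkeeping at all and simply appeals to $F$ and $\J$ being homotopic in the orbifold sense, so your treatment is if anything more careful than the source. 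In substance the approach and its use in the overall argument are identical.
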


The rest of the proof of Theorem~\ref{theorem:main} is devoted to finding such an $x\in\Mc_g$.  

\subsection*{Step 3: The Wirtinger squeeze}

We continue with the running assumption that $F:\Mc_g\to\Ac_g$ is a holomorphic map homotopic to the period mapping $\J$.  This step is devoted to proving the following result. 

\begin{lemma}[{\bf Paths of holomorphic curves}]
\label{lemma:holopath}
Let $C\subset \Mc_g$ be any smooth (not necessarily projective) curve.  There exists a homotopy $H:[0,1]\times C\to\Ac_g$ with $H_0=\J|_C$ and $H_1=F|_C$ such that for each $t\in [0,1]$ the map 
\[H_t:C\to \Ac_g\] is holomorphic.
\end{lemma}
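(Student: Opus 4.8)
The plan is to follow the strategy of Antonakoudis--Aramayona--Souto \cite{AAS}, replacing the given free homotopy $\J|_C \sim F|_C$ by a \emph{geodesic} homotopy and then showing the geodesic homotopy consists of holomorphic maps by means of a Wirtinger-type inequality. First I would fix a locally symmetric metric on $\Ac_g$ coming from the Bergman (or Siegel) metric on ${\mathfrak h}_g$; this is a complete K\"ahler metric of nonpositive sectional curvature. Since both $\J|_C$ and $F|_C$ are homotopic (Step 1), for each point $p \in C$ there is a unique geodesic in $\Ac_g$ from $\J(p)$ to $F(p)$ in the correct homotopy class (using that ${\mathfrak h}_g$ is a Hadamard manifold and lifting to universal covers). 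Define $H_t(p)$ to be the point at parameter $t$ along this geodesic; this yields a smooth (indeed real-analytic) homotopy $H:[0,1]\times C \to \Ac_g$ with $H_0 = \J|_C$, $H_1 = F|_C$, and the key feature that $t \mapsto H_t(p)$ is a geodesic for every $p$.

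The heart of the argument is to show each $H_t$ is holomorphic. For a smooth map $f$ from a Riemann surface $C$ to a K\"ahler manifold, holomorphicity is detected by the vanishing of the $\bar\partial$-energy, equivalently by the failure of equality in the Wirtinger inequality: writing $\omega$ for the K\"ahler form of $\Ac_g$ and $\mathrm{dvol}_C$ for the area form of a conformal metric on $C$, one always has $f^*\omega \leq e(f)\,\mathrm{dvol}_C$ pointwise, with equality exactly at points where $df$ is complex-linear. Thus the nonnegative quantity $\Phi(t) := \int_C \big(e(H_t) - H_t^*\omega\big)$ vanishes iff $H_t$ is holomorphic. We know $\Phi(0) = \Phi(1) = 0$ since $\J|_C$ and $F|_C$ are holomorphic. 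The plan is to show $\Phi$ is convex in $t$, which forces $\Phi \equiv 0$. The term $\int_C H_t^*\omega$ is independent of $t$: it is the integral of a closed $2$-form pulled back along a homotopy, hence a homotopy invariant. So it suffices to prove that the energy $E(t) := \int_C e(H_t)\,\mathrm{dvol}_C$ is a convex function of $t$. This is where nonpositive curvature enters: the variation field $t \mapsto \frac{\partial}{\partial t}H_t$ is a Jacobi field along each geodesic $t\mapsto H_t(p)$ (since the $H_t$ are geodesics in the $t$-direction, the first variation is automatic and the second variation of energy is governed by the index form), and the standard second-variation formula for the energy of a family of geodesic homotopies into a nonpositively curved target shows $E''(t) \geq 0$. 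One has to be a little careful that $C$ is noncompact, so the relevant integrals must be shown to converge or the argument localized; but the convexity is pointwise in the integrand before integration, so nonpositive curvature gives $\frac{d^2}{dt^2}\big(e(H_t)\big) \geq 0$ as a function on $C$ for the density, and integrating preserves this.

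Once convexity of $\Phi$ is established together with $\Phi(0)=\Phi(1)=0$ and $\Phi \geq 0$, we get $\Phi(t) = 0$ for all $t \in [0,1]$, hence $H_t^*\omega = e(H_t)\,\mathrm{dvol}_C$ everywhere on $C$, hence $dH_t$ is complex-linear at every point, i.e.\ $H_t : C \to \Ac_g$ is holomorphic for each $t$, as desired. The main obstacle I anticipate is the noncompactness of $C$: making sense of $E(t)$, $\Phi(t)$ and the integration-by-parts in the second-variation formula requires control on the behavior of $H_t$ near the ends of $C$ (e.g.\ the geodesic homotopy has bounded displacement since $\J$ and $F$ are proper/algebraic, but one still needs the energy density and its $t$-derivatives to be integrable, or else one exhausts $C$ by compact subsurfaces with boundary and controls the boundary terms). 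A secondary technical point is verifying that the geodesic homotopy is smooth in $(t,p)$ jointly and depends real-analytically on $p$, so that the differentiations under the integral sign are justified; this follows from smooth dependence of geodesics on endpoints in a Hadamard manifold, but should be stated. Modulo these analytic points, the convexity-of-energy-along-Jacobi-fields argument, played against the topological invariance of $\int H_t^*\omega$ and the Wirtinger inequality, delivers the lemma.
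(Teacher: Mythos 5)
Your proposal follows precisely the paper's strategy (which the paper itself attributes to Antonakoudis--Aramayona--Souto): pass to a geodesic homotopy via nonpositive curvature, use convexity of energy along Jacobi fields, and squeeze against the Wirtinger/Eells--Sampson inequality together with the homotopy-invariance of $\int_C H_t^*\omega$. The technical concerns you flag about noncompactness of $C$ are exactly the ones the paper addresses by citing the corresponding finite-energy and exhaustion arguments from \cite{AAS}.
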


The proof of Lemma~\ref{lemma:holopath} follows closely the proof by Antonakoudis-Aramayona-Souto of the Imayoshi-Shiga Theorem; see 
\S 4 of \cite{AAS}.  

\begin{proof}[Proof of Lemma~\ref{lemma:holopath}]
Let $G:[0,1]\times C\to\Ac_g$ be the restriction of the given homotopy $F\sim \J$ to the curve $C$.  I claim that there is a homotopy $H:[0,1]\times C\to\Ac_g$ with the property that $H_0=F|_C, H_1=\J|_C$ and for each $x\in C$, each path $\beta_x:[0,1]\to\Ac_g$ defined by $\beta_x(t):=H_t(x)$ is a geodesic in $\Ac_g$.   To see this, for any $x\in C$ consider the path $\gamma_x(t):=G_t(x)$.   Lift this path to a path $\widetilde{\gamma}_x:[0,1]\to {\mathfrak h}_g$.  Recall that ${\mathfrak h}_g$ is a bounded symmetric domain whose $\Sp(2g,\Rb)$-invariant Kahler metric has nonpositive sectional curvature.  Thus there is a unique (not necessarily unit speed) geodesic $\beta_x:[0,1]\to {\mathfrak h}_g$ with $\beta_x(0)=\gamma_x(0)$ and 
$\beta_x(1)=\gamma_x(1)$.  Further, there is a canonical homotopy from $\gamma_x$ to $\beta_x$ given by orthogonal projection onto a geodesic segment.  Since orthogonal projection onto a geodesic segment varies continuously with its endpoints, we obtain after composition with the projection ${\mathfrak h}_g\to \Ac_g$ the claimed homotopy $H$.

Given any $x\in C$, since the path $t\mapsto H_t(x)$ is a geodesic it follows that for any $v\in T_xC$ the vector field 
$t\mapsto (D_xH_t)(v)$ is a Jacobi field along $\beta_x$ in $\Ac_g$.  Since $\Ac_g$ has nonpositive curvature,  the function $t\mapsto ||(D_xH_t)(v)||$ is convex; here the norm is taken with respect to the inner product on $D_xH_t(T_xC)$.

Let $f:X\to Y$ be any smooth map from a Kahler $1$-manifold to a 
Kahler manifold $Y$.  The {\em energy of $f$ at $x$} is defined to be
\[E_x(f):=\frac{1}{2} [||D_xf(u)||^2+||D_xf(v)||^2] 
\]
where $\{u,v\}$ is any orthonormal basis for $T_xX$ and where the norms are those induced by the inner product metric on $T_{f(x)}Y$ given by the Riemannian metric on $Y$.  The number $E_x(f)$ does not depend on the choice of $\{u,v\}$.   Since $t\mapsto ||(D_xH_t)(v)||$ is convex it follows that $t\mapsto E_x(t)$ is convex for each fixed $x$.   For any smooth $f:X\to Y$, the {\em energy of $f$} is defined to be 
\[E(f):=\int_XE_x(f)\vol_X\]
where $\vol_X$ is the volume form on $X$.  

The following result is a variation of the Wirtinger inequality.  Before stating it we remark that a complex structure on a genus $g\geq 0$ surface $X$ determines an orientation on $X$, and so an isomorphism $\wedge^2T_xX\to\Rb$ for each $x\in X$.  For each $x\in X$, 
the standard ordering $\leq$ on $\Rb$ pulls back via this isomorphism to an ordering on 
$\wedge^2T_x$.  Two $2$-forms on $X$ can thus be compared pointwise. 

\begin{proposition}[{\bf Eells-Sampson}]
\label{proposition:ES}
Let $f:X\to Y$ be a smooth map from a finite area Kahler $1$-manifold $X$ to a complete Kahler manifold $Y$.  Assume that $E(f)<\infty$.  Let $\omega_X$ and $\omega_Y$ be the Kahler forms on $X$ and $Y$, respectively.  
Then 
\begin{equation}
\label{eq:ES4}
f^*\omega_Y(x)\leq E_x(f)\omega_X(x) \ \ \ \text{for all}\ x\in X
\end{equation}
with equality if and only if $f$ is holomorphic at $x$.
\end{proposition}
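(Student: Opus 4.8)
<br>

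The plan is to prove the pointwise Wirtinger-type inequality $f^*\omega_Y(x) \le E_x(f)\,\omega_X(x)$ by a purely linear-algebraic computation at each point, followed by identifying the equality case. Fix $x \in X$ and work with the real linear map $D_xf : T_xX \to T_{f(x)}Y$. The key observation is that everything in \eqref{eq:ES4} is determined by $D_xf$ together with the two complex structures (equivalently, the Kähler forms), so there is no analysis involved at this stage — the finite-area and finite-energy hypotheses only enter later, when this proposition is applied via Stokes' theorem to compare $\int_X f^*\omega_Y$ and $\int_X E(f)$.

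First I would choose an orthonormal basis $\{u, v\}$ of $T_xX$ with $v = J_X u$, so that $\omega_X(x)(u,v) = 1$ and $E_x(f) = \tfrac12(\|D_xf(u)\|^2 + \|D_xf(v)\|^2)$. Writing $a := D_xf(u)$ and $b := D_xf(v)$ in $T_{f(x)}Y$, one has $f^*\omega_Y(x)(u,v) = \omega_Y(a,b) = \langle J_Y a, b\rangle$. So the inequality reduces to the elementary estimate
\[
\langle J_Y a, b \rangle \le \tfrac12\left(\|a\|^2 + \|b\|^2\right)
\]
for any two vectors $a,b$ in a Hermitian vector space. This follows immediately from $|\langle J_Y a, b\rangle| \le \|J_Y a\|\,\|b\| = \|a\|\,\|b\| \le \tfrac12(\|a\|^2 + \|b\|^2)$, the first step being Cauchy--Schwarz and the second the AM--GM inequality.

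For the equality case I would track when both inequalities above are simultaneously equalities. Equality in AM--GM forces $\|a\| = \|b\|$; equality in Cauchy--Schwarz forces $b$ to be a nonnegative multiple of $J_Y a$, and combined with the norm equality this gives $b = J_Y a$ (the case $a = b = 0$ being trivially holomorphic). Thus equality holds exactly when $D_xf(J_X u) = J_Y(D_xf(u))$, i.e.\ when $D_xf$ is complex-linear at $x$ — which is precisely the statement that $f$ is holomorphic at $x$. I would need to check that the sign conventions (orientation of $X$ induced by $J_X$, sign of $\omega_Y = \langle J_Y \cdot, \cdot\rangle$) are consistent so that the inequality goes in the stated direction rather than its reverse; this is the one genuine bookkeeping point, and it is where the hypothesis that $X$ is Kähler (so $\omega_X$ is the honest area form for the chosen orientation) is used. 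The only possible obstacle is purely notational — making sure the convexity-of-energy discussion preceding the proposition is compatible with this pointwise statement — since the mathematical content is just Cauchy--Schwarz plus AM--GM; indeed this is the classical computation underlying the Eells--Sampson formula, and no deeper input is required.
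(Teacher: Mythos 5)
Your proof is correct, and it simply fills in the elementary pointwise linear algebra that the paper defers to a citation: the paper's ``proof'' is just a pointer to the second proposition on p.~126 of Eells--Sampson \cite{ES} together with the remark that the argument there is by elementary pointwise estimates, which is exactly the Cauchy--Schwarz/AM--GM computation you write out. One small point worth stating explicitly when you check the sign conventions: having verified $D_xf(J_Xu)=J_Y\,D_xf(u)$ for the single basis vector $u$, complex-linearity of $D_xf$ on all of $T_xX$ follows because $T_xX$ is spanned over $\Rb$ by $u$ and $J_Xu$, and the relation on $J_Xu$ is then forced by real linearity and $J^2=-\mathrm{id}$; since $X$ has complex dimension one this is all that is needed, but it deserves a sentence.
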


\begin{proof}
For compact $X$ the inequality \eqref{eq:ES4} with both sides integrated over $X$ is stated as the second proposition on page 126 of \cite{ES}.  However, the proof there proceeds by proving \eqref{eq:ES4}.  The proof of \eqref{eq:ES4} in \cite{ES}, as well as the ``if and only if'' statement for equality, is by elementary {\em pointwise} estimates, as well as the assumption $E(f)<\infty$, which we are assuming.     
\end{proof}

The proof of Lemma 4.1 of \cite{AAS} with the target replaced by $\Ac_g$ with its Kahler metric {\em mutatis mutandis} gives that $E(H_t)<\infty$ for each $t\in [0,1]$.  We can thus apply Proposition~\ref{proposition:ES} to each map $H_t, t\in[0,1]$, giving 
\begin{equation}
\label{eq:squeeze0}
H_t^*\omega_{\Ac_g}(x)\leq E_x(H_t)\omega_C \ \ \ \text{for all}\ x\in C.
\end{equation}

Integrating \eqref{eq:squeeze0} pointwise gives 
\begin{equation}
\label{eq:squeeze1}
E(H_t)\geq \int_C(H_t^*\omega_{\Ac_g}).
\end{equation}

On the other hand, since $H_t$ is homotopic  to $H_s$ for all $s,t\in [0,1]$ it follows  \footnote{There is an issue when $C$ is not compact, but the argument {\it verbatim} on page 226 of \cite{AAS} gives this result.  It uses an exhaustion of $C$ by compact sets, together with Stokes's Theorem.} that
\begin{equation}
\label{eq:squeeze2}
 \int_C(H_s^*\omega_{\Ac_g})= \int_C(H_t^*\omega_{\Ac_g})\ \ \ \text{for all}\ s,t\in [0,1].
 \end{equation}
 
 Now, $H_0:=F$ and $H_1:=\J$ are holomorphic, so the equality statement of Proposition~ \ref{proposition:ES} together with \eqref{eq:squeeze2} implies that
 \begin{equation}
 \label{eq:squeeze3}
 E(H_0)=\int_C(H_0^*\omega_{\Ac_g})=\int_C(H_1^*\omega_{\Ac_g})=E(H_1).
 \end{equation}
 
Combining \eqref{eq:squeeze1}, \eqref{eq:squeeze2} and \eqref{eq:squeeze3} 
gives 
\begin{equation}
\label{eq:squeeze4}
E(H_t)\geq \int_C(H_t^*\omega_{\Ac_g})=\int_C(H_1^*\omega_{\Ac_g})=E(H_1)=E(H_0) \ \ \ \text{for each}\ t\in[0,1].
\end{equation}

Since $t\mapsto E(H_t)$ is convex, \eqref{eq:squeeze4}  implies that for each $t\in [0,1]$:
\begin{equation}
\label{eq:squeeze5}
E(H_t)=\int_C(H_t^*\omega_{\Ac_g}).
\end{equation}
Since the pointwise estimate \eqref{eq:squeeze0} holds for each $x\in C$, 
it follows from \eqref{eq:squeeze5} (and the fact that $E_x(H_t)\geq 0$) that equality holds in \eqref{eq:squeeze0} for all $x\in C, t\in[0,1]$.  Proposition~\ref{proposition:ES} then implies that for each fixed $t\in[0,1]$ the map $H_t:X\to\Ac_g$ is holomorphic.

\end{proof}
\subsection*{Step 4: Improvement to a path of morphisms}

The goal of this step is to prove that the holomorphic maps $H_t$ are in fact morphisms.

\begin{lemma}
\label{lemma:morphismpath}
Let $C\subset\Mc_g$ be any smooth algebraic curve.  For each $t\in [0,1]$ the map $H_t:C\to \Ac_g$ is a morphism of varieties.
\end{lemma}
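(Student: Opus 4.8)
The plan is to promote each holomorphic map $H_t:C\to\Ac_g$ to a morphism of varieties by first extending it to the smooth projective completion $\overline{C}$ of $C$ and then invoking Chow's Theorem. First I would fix a smooth projective curve $\overline{C}$ containing $C$ as a Zariski-open subset, so that $\overline{C}\setminus C$ is a finite set of points. The key input is the big Picard-type theorem of Kobayashi--Ochiai \cite{KO}: since $\Ac_g$ is a quasiprojective variety whose natural (Satake or Baily--Borel) compactification $\overline{\Ac_g}^{\rm S}$ is a normal projective variety, and since $\Ac_g$ is (measure-)hyperbolically embedded in $\overline{\Ac_g}^{\rm S}$, any holomorphic map from a punctured disc into $\Ac_g$ extends holomorphically across the puncture into $\overline{\Ac_g}^{\rm S}$. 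Applying this at each of the finitely many punctures of $C$ inside $\overline{C}$, I obtain an extension $\overline{H_t}:\overline{C}\to\overline{\Ac_g}^{\rm S}$ which is holomorphic.

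Once $\overline{H_t}$ is a holomorphic map between the compact complex-analytic spaces $\overline{C}$ and $\overline{\Ac_g}^{\rm S}$ (the latter being projective), Chow's Theorem (equivalently, Serre's GAGA, or the theorem that a holomorphic map from a compact Riemann surface to a projective variety is algebraic) gives that $\overline{H_t}$ is a morphism of projective varieties. Restricting back to the Zariski-open set $C\subset\overline{C}$, and noting that $\overline{H_t}$ carries $C$ into $\Ac_g$ (since $H_t$ already did, and $\Ac_g$ is open in $\overline{\Ac_g}^{\rm S}$), the restriction $H_t=\overline{H_t}|_C:C\to\Ac_g$ is a morphism of quasiprojective varieties, as desired. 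One should also address the orbifold subtlety: the maps are a priori in the orbifold category, but after passing to a fixed level structure where $\Ac_g$ and a neighborhood of the relevant part of $\overline{C}$ become manifolds/varieties, the argument applies verbatim, and morphism-ness descends.

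The main obstacle I expect is verifying the precise hypotheses of the Kobayashi--Ochiai extension theorem in the orbifold/non-compact setting — specifically, that $\Ac_g$ sits inside $\overline{\Ac_g}^{\rm S}$ in the way required (hyperbolic embeddedness, or the weaker condition sufficient for their extension theorem), and that one may apply it uniformly in $t$. This is essentially classical: $\Ac_g$ is covered by the bounded symmetric domain ${\mathfrak h}_g$, hence is Kobayashi hyperbolic, and the structure of the boundary of the Satake compactification is well understood, so the relevant hyperbolicity-at-the-boundary statement is available (this is exactly the ingredient Kobayashi--Ochiai designed their theorem for). A secondary, more bookkeeping-level point is ensuring the extension $\overline{H_t}$ is genuinely \emph{holomorphic} across the punctures rather than merely continuous or meromorphic; here the fact that the target is a \emph{compact} analytic space, so the image near a puncture lies in a compact set, together with the removable-singularity content of the Kobayashi--Ochiai theorem, settles it. Uniformity in $t$ is not actually needed for this lemma, since the conclusion is pointwise in $t$; continuity of $t\mapsto H_t$ will be used only later, in Steps 5 and 6.
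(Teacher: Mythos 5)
Your proposal is correct and follows essentially the same approach as the paper: extend $H_t$ across the finitely many punctures of $C$ to a holomorphic map $\overline{C}\to\overline{\Ac_g}^{\rm S}$ via the Kobayashi--Ochiai big Picard theorem, then apply Chow's Theorem to the compact source and projective target, and restrict back to $C$. The only cosmetic difference is that you phrase the Kobayashi--Ochiai input in terms of hyperbolic embeddedness, whereas the paper quotes their Theorem $2'$ directly in the form tailored to arithmetic quotients $D/\Gamma$ of bounded symmetric domains (which also handles the orbifold issue you raise, since the theorem is stated there for $\Gamma$ not necessarily torsion-free); the substance is the same.
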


\begin{proof}
We need the following.

\begin{proposition}[\cite{KO}, Theorem $2'$]
\label{proposition:KO1}
Let $X$ be a complex manifold and let $A\subset X$ be a locally closed complex submanifold.  Let $D$ be a bounded symmetric domain in some $\Cb^N$ and let $G$ be the largest connected subgroup of biholomorphic automorphisms of $D$.  (Thus $G$ is a semisimple Lie group of noncompact, Hermitian type.)  Let $\Gamma\subset G$ be a 
(not necessarily torsion-free) arithmetic subgroup of $G$, and let $Y:=D/\Gamma$.  
Let $\overline{Y}^S$ be the Satake compactification of $Y$.  Then every holomorphic mapping $X-A\to Y$ extends to a holomorphic mapping $X\to \overline{Y}^S$.  
\end{proposition}

Apply Proposition~\ref{proposition:KO1} with $X:=\overline{C}$, the projective closure of $C$; the set $A:=\overline{C}-C$, a (possibly empty) finite set of points; $D={\mathfrak h}_g$, the Siegel upper half-space, a bounded domain in $\Cb^{\binom{g+1}{2}}$; the group $G=\Aut({\mathfrak h}_g)=\Sp(2g,\Rb)$; the arithmetic group $\Gamma:=\Sp(2g,\Zb)$; 
the quotient $Y:=\Ac_g={\mathfrak h}_g/\Sp(2g,\Zb)$; and the holomorphic map 
$H_t:C\to\Ac_g$.   Note that the theorem in \cite{KO} is explicitly stated for the case when $\Gamma$ is not torsion free, as in the case $\Gamma=\Sp(2g,\Zb)$.  

Thus $H_t:C\to\Ac_g$ extends uniquely to a holomorphic map 
$\overline{H}_t: \overline{C}\to \overline{\Ac_g}^S$ where $\overline{\Ac_g}^S$ denotes the Satake compactification.  By Chow's Theorem applied to each fixed $H_t, t\in[0,1]$, the map $\overline{H}_t$ is algebraic; that is, it is a morphism of varieties.  It follows that the restriction $H_t:C\to\Ac_g$ to the Zariski open $C\subset\overline{C}$ is a morphism.
\end{proof}

\subsection*{Step 5: Existence of an $\Ac_g$-rigid curve in $\Mc_g$}

For complex varieties $X$ and $Y$ let $\Hol(X,Y)$ denote the space of holomorphic maps $X\to Y$ equipped with the compact-open topology.  It is known that $\Hol(X,Y)$ is a Zariski open subset of a compact complex space, but we will not need this.  The subset $\Mor(X,Y)\subseteq\Hol(X,Y)$ of morphisms $X\to Y$ inherits the subspace topology.   A morphism $\phi:X\to Y$ is {\em rigid} if it is an isolated point of $\Mor(X,Y)$.

\begin{definition}[{\bf $\Ac_g$-rigid curve}]
A curve $i:C\to\Mc_g$ is {\em $\Ac_g$-rigid} if 
\[\J\circ i:C\to\Ac_g\] is rigid; in other words, $\J\circ i$ is an isolated point of $\Mor(C,\Ac_g)$.  
\end{definition}

Faltings, Saito and others constructed many interesting nonrigid curves in $\Ac_g$; see \cite{S}.  M\"{o}ller \cite{Mo} found curves in $\Mc_g$ that are not $\Ac_g$-rigid.  In fact, while McMullen \cite{Mc} proved that Teichm\"{u}ller curves are rigid in $\Mc_g$, they are not $\Ac_g$-rigid.   I do not know any explicit curves in $\Mc_g$ that are $\Ac_g$-rigid.   However, such curves do exist.

\begin{proposition}
For all $g\geq 3$ there exists an $\Ac_g$-rigid curve $i:C\to\Mc_g$.
\end{proposition}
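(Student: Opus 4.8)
The plan is to exhibit the curve by a genericity argument and to certify its $\Ac_g$-rigidity using the rigidity criterion of Saito \cite{S} (building on Faltings) for families of abelian varieties over a curve.

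\textbf{Reduction to Saito's criterion.} A smooth curve $i:C\hookrightarrow\Mc_g$ carries the family of Jacobians $\pi:\mathcal{X}\to C$, with associated polarized weight-one $\Z$-variation of Hodge structure $V=R^1\pi_*\Z$ of rank $2g$, Hodge bundle $\mathcal{E}=\pi_*\omega_{\mathcal{X}/C}=F^1V$, and $\J\circ i$ equal to the period map of $V$. Infinitesimally, deformations of the morphism $\J\circ i:C\to\Ac_g$ are controlled by $H^0$ of the pullback of $T_{\Ac_g}=\Sym^2\mathcal{E}^\vee$, with a growth condition over the cusps of $C$ --- i.e.\ by $H^0(\overline C,\Sym^2\overline{\mathcal{E}}^\vee)$ for the Deligne canonical extension $\overline{\mathcal{E}}$; since $\overline{\mathcal{E}}$ is nef, a nonzero such section would, via the theorem of the fixed part, produce a nonzero monodromy-invariant vector in $\Sym^2 V$. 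Saito's theorem classifies precisely which $V$ make this possible (the non-rigid ones). The upshot I would extract is: $\J\circ i$ is rigid provided (a) the geometric monodromy group of $V$ is Zariski-dense in $\Sp_{2g}$ --- one can in fact arrange it to be exactly $\Sp(2g,\Z)$ --- and (b) the secondary hypothesis $(\star)$ of Saito's criterion holds, which for such $V$ says that $V$ is not, up to isogeny, of ``exceptional'' type: not a constant variation, and not one induced from the standard variation over a modular or Shimura curve (equivalently, $C$ is not a Shimura curve). It thus suffices to construct $i:C\hookrightarrow\Mc_g$ satisfying (a) and (b).

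\textbf{Full monodromy for a general curve.} The symplectic representation $\Mod(S_g)=\pi^{\rm orb}_1(\Mc_g)\twoheadrightarrow\Sp(2g,\Z)$ is surjective, so the universal family of Jacobians over $\Mc_g$ has monodromy $\Sp(2g,\Z)$. Fixing a projective compactification and (if one wishes to avoid orbifold issues) a smooth finite-level cover, a Lefschetz-type theorem for fundamental groups of quasi-projective varieties (Hamm--L\^e; Goresky--MacPherson) gives that a sufficiently general complete-intersection curve $C$ has $\pi_1(C)$ surjecting, through the monodromy, onto a finite-index subgroup of $\Sp(2g,\Z)$; since $\dim\Mc_g=3g-3\geq 6$ there is ample room, and with a little care (choosing $C$ through loops realizing a generating set) one gets monodromy exactly $\Sp(2g,\Z)$. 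Note that Zariski-dense monodromy already forces $V|_C$ to be an irreducible variation with $\operatorname{End}(V|_C)=\Q$, so the constant-factor alternative in $(\star)$ is automatically excluded.

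\textbf{Genericity, and the main obstacle.} Granting full monodromy, $(\star)$ can fail only if $V|_C$ is induced from the standard variation over a modular or Shimura curve, whence $C$ maps non-constantly to, hence is, one of the countably many Shimura curves contained in $\Mc_g$. Inside an irreducible algebraic family of complete-intersection curves covering $\Mc_g$ (or its level cover), the members equal to a fixed Shimura curve form a proper closed subset of the parameter space; intersecting over the countably many Shimura curves, together with the open full-monodromy condition, a very general member $C$ satisfies both (a) and (b). Applying Saito's criterion to the resulting $i:C\hookrightarrow\Mc_g$ gives that $\J\circ i:C\to\Ac_g$ is rigid, i.e.\ $C$ is $\Ac_g$-rigid. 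I expect the main difficulty to be the interface with \cite{S}: pinning down the precise content of the hypothesis $(\star)$ for a non-compact base and matching it with the deformation theory of the extended morphism $\overline C\to\overline{\Ac_g}^S$ produced in Step 4, checking the boundary/cusp hypotheses of Saito's theorem for a general complete-intersection curve, and confirming that $(\star)$ is genuinely a very general condition. This last point --- together with the Lefschetz/monodromy input --- is where the hypothesis $g\geq 3$ enters, consistent with the case $g=2$ remaining open.
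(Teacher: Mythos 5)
Your strategy---pass to a suitable smooth model, use a Lefschetz-type theorem to produce a complete-intersection curve $C$ with large monodromy, then invoke Saito's rigidity criterion---is the same broad plan the paper follows. But there is a genuine gap in how you verify Saito's hypotheses, and it changes the shape of the argument.

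The criterion the paper actually uses is Saito (\cite{S}, Theorem 8.6), which is a concrete, checkable sufficient condition: rigidity holds if (1) the monodromy representation is irreducible, and (2) the \emph{local} monodromy around at least one cusp of $C$ has infinite order. You correctly deal with (1) via Lefschetz, but your condition (b) replaces (2) with an informal restatement of Saito's \emph{classification} of non-rigid families (``not induced from a Shimura curve,'' etc.), which you then propose to verify by a ``very general'' Baire-type argument over the countably many Shimura curves. This is not what Theorem 8.6 says, and you acknowledge not having pinned it down. Nothing in your argument produces or uses a cusp of $C$, which is the actual hypothesis needed. The paper verifies (2) cleanly and constructively: Bezout forces the closure $\overline C\subset\DM$ to meet the codimension-one boundary stratum $Z$ parametrizing non-separating nodal degenerations, so $C$ is non-compact and the local monodromy at such a boundary point is a symplectic transvection (a Dehn twist), hence of infinite order. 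This replaces your countability/very-general argument with a short, explicit check, and avoids the delicate interface questions you flag at the end.

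Two further points. First, you wave at the orbifold problem by proposing a finite-level cover; the paper instead excises the locus $\Oc$ of curves with automorphisms and shows $\pi_1(\Mc_g-\Oc)\cong\Mod(S_g)$ using $\operatorname{codim}\widetilde\Oc=g-2\geq 2$, which is where $g\geq 4$ enters. Your level-cover alternative is reasonable, but you should note that it only yields a finite-index (congruence) subgroup of $\Sp(2g,\Zb)$ as monodromy; that is still irreducible, so Saito still applies, but it is worth saying. Second, the paper must and does treat $g=3$ separately (the hyperelliptic locus has codimension $1$, so the codimension argument fails), using a different input: Saito's Corollary 8.4 on abelian schemes of relative dimension $\leq 7$ with no isotrivial factor, applied to any curve through a point with simple Jacobian. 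Your proposal does not isolate or resolve the $g=3$ case; the closing sentence gestures at ``this is where $g\geq 3$ enters'' without an argument.
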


\begin{proof}
In this proof the orbifold nature of $\Mc_g$ will cause a complication, since the usual (i.e.\ not orbifold) fundamental group of $\Mc_g$ is trivial.  We will address this by excising the ``orbifold locus'' of $\Mc_g$, as follows.

Let $\Oc\subset \Mc_g$ be the subvariety of smooth, genus $g$ Riemann surfaces with nontrivial automorphism group; this is a union of irreducible components, one for each topological type of faithful finite group action on $S_g$.  The Riemann-Hurwitz Formula implies that the dimension (over $\Cb)$ of each irreducible component of $\Oc$ is at most $2g-1$, with equality precisely for the hyperelliptic locus in $\Mc_g$.  The space $\Mc_g-\Oc$ is a smooth, quasiprojective variety.

We first consider the case when $g\geq 4$.  We claim that in this case 
$\pi_1(\Mc_g-\Oc)\cong\Mod(S_g)$. To see this, let 
$p:\Teich(S_g)\to\Mc_g$ be the quotient of $\Teich(S_g)$ by the action of 
$\Mod(S_g)$.  Let $\widetilde{\Oc}:=p^{-1}(\Oc)$.  
The action of $\Mod(S_g)$ on $\Teich(S_g)$ restricts to an action of $\Mod(S_g)$ on $\Teich(S_g)-\widetilde{\Oc}$ that is free and properly discontinuous.  Since 
\[{\rm codim}_\Cb(\widetilde{\Oc})=(3g-3)-(2g-1)=g-2 \geq 2 \ \ \text{for $g\geq 4$}\]
it follows that 
\[\pi_1(\Teich(S_g)-\widetilde{\Oc})=\pi_1(\Teich(S_g))=0,\]
so that 
\[\pi_1(\Mc_g-\Oc)=\pi_1((\Teich(S_g)-\widetilde{\Oc})/\Mod(S_g))=\Mod(S_g).\]

Let $\DM$ denote the Deligne-Mumford compactification of $\Mc_g$.  It is a smooth projective variety $\DM\subset\Pb^N$ with normal crossing divisor.
Recall that $\dim_\Cb\DM=\dim_\Cb\Mc_g=3g-3$ for $g\geq 2$. Bertini's Theorem implies that for generic hyperplanes $P_1,\ldots ,P_{3g-4}$ in $\Pb^N$, the intersection 
\[C:=(\Mc_g-\Oc)\cap P_1\cap\cdots\cap P_{3g-4}\]
is a smooth curve in $\Mc_g-\Oc$. Let $i:C\to\Mc_g-\Oc$ denote the inclusion. The Lefschetz Hyperplane Theorem for quasiprojective varieties (see \cite{HT}) implies that 
\begin{equation}
\label{eq:monodromy5}
i_*:\pi_1(C)\to\pi_1(\Mc_g-\Oc)\cong\Mod(S_g)
\end{equation}
is a surjection.\footnote{Note that, had we not excised $\Oc$, then the above argument gives no information since $\pi_1(\Mc_g)=0$ for $g\geq 1$; here $\pi_1$ is the topological (not orbifold) fundamental group.}

The codimension $1$ strata of $\partial\Mc_g:=\DM-\Mc_g$ consist of limits of degenerations of complex curves 
in $\Mc_g$ that pinch some (topological type of) simple closed curve $\beta$ on $S_g$ to a point.  Let $Z\subset\partial\Mc_g$ denote the unique codimension-$1$ stratum corresponding to the case where the topological type of $\beta$ is non-separating.  Note that the monodromy of the universal family over $\Mc_g$ restricted to a small loop in $\Mc_g$ around $Z$ is the cyclic subgroup generated by a Dehn twist about a non-separating simple closed curve.

The pullback $(\J\circ i)^*$ of the universal principally polarized abelian variety over $\Ac_g$ gives an algebraic family $f:E\to C$ of $d$-dimensional, principally polarized abelian varieties over $C$.  The monodromy representation of this family is 
\[\mu:=(\J\circ i)_*:\pi_1(C)\to \Sp(2g,\Zb)\] where $\Sp(2g,\Zb)$  acts on $H_1(f^{-1}(c_0);\Zb)$ as the standard symplectic representation; 
here $c_0\in C$ is a basepoint for the family.  By \eqref{eq:monodromy5}, and since the action of $\Mod(S_g;\Zb)$ on $H_1(f^{-1}(c_0);\Zb)$ gives the standard symplectic representation, it follows that the monodromy $\mu:\pi_1(C)\to\Sp(2g,\Zb)$ of the family $f:E\to C$ is surjective.   In particular
\begin{equation}
\label{eq:saitocond1}
\text{\it The monodromy of the family $f:E\to C$ is irreducible. }
\end{equation}

Since $\dim_\Cb Z=\dim_\Cb\DM-1=3g-4$, it follows from Bezout's Theorem applied to the Zariski closure $\overline{C}$ in $\DM$ that $S:=\overline{C}\cap Z\neq\emptyset$.  Thus $C$ is a noncompact curve with punctures $\{s\in S\}$.  If $\gamma_s$ is a small loop around $s\in S$ then, as explained above,  $i_*([\gamma])\in\Mod(S_g)$ is a Dehn twist about a nonseparating curve.  Thus the monodromy $\mu([\gamma])\in\Sp(2g,\Zb)$ is a symplectic transvection.  In particular 

\begin{equation}
\label{eq:saitocond2}
\text{\it The monodromy $\mu([\gamma])\in\Sp(2g,\Zb)$ has infinite order.}
\end{equation}

The properties given in \eqref{eq:saitocond1} and \eqref{eq:saitocond2} are precisely the hypotheses of a criterion of Saito (\cite{S}, Theorem 8.6), which states that if these conditions hold then the family $f:E\to C$ is rigid; that is, the map $J\circ i:C\to \Ac_g$ 
is an isolated point in $\Mor(C,\Ac_g)$, finishing the proof of the $g\geq 4$ case.

We now consider the $g=3$ case (the proof actually works for all $g\leq 7$).  Saito (\cite{S}, Corollary 8.4) proved that any abelian scheme with no isotrivial factors and of relative dimension at most $7$ must be rigid.  Let $i:C\to\Mc_3$ be any curve with the property that there exists $c\in C$ such that $J\circ i(c)$ is a simple abelian variety; such $C$ are known to exist for all $g\geq 1$.  Applying Saito's result gives an $\Ac_g$-rigid curve in $\Mc_g$ for all $g\leq 7$, in particular for $g=3$.
\end{proof}

\subsection*{Step 6: Finishing the proof}

Let $C\subset \Mc_g$ be the $\Ac_g$-rigid curve constructed in Step 5.  By Steps 3 and 4, in particular Lemma~\ref{lemma:morphismpath}, the homotopy 
$G_t:\Mc_g\to\Ac_g$ between $J$ and $F$ restricts to a homotopy 
$H_t:C\to\Ac_g$, giving 
a path $\alpha:[0,1]\to\Mor(C,\Ac_g)$ defined by $\alpha(t):=H_t$.  
Since $C$ is $\Ac_g$-rigid the path component of $\alpha(0)=H_0=J|_C$ in the space $\Mor(C,\Ac_g)$ is a single point, and so the path $\alpha$ is constant.  In particular 
\[J(x)=H_0(x)=H_1(x)=F(x)\ \ \ \text{for all $x\in C$}.\]

By Lemma~\ref{lemma:BNconsequence}, this implies that $J(x)=F(x)$ for all $x\in\Mc_g$, proving Theorem~\ref{theorem:main}.

\section{An Alternative approach to Theorem~\ref{theorem:main}}
\label{section:Hodge}

In this section we give a different approach to proving Theorem~\ref{theorem:main} by replacing Step 3 with a Hodge theory argument.  This proof is due to Richard Hain, who informed us of it after reading an earlier draft of this paper.  For simplicity we consider only the case $n=0$.   

\begin{proof}
By Step 1, we are reduced to the case $h=g\geq 3$ and $F$ homotopic to $\J$.  By a slight variation of the argument in Step 5, we can cover 
$\Mc_g$ by complete curves with the property that the inclusion $i:C\to\Mc_g$ induces a surjection $i_*:\pi_1(C)\to\Mod(S_g)$.    Let $C$ be a curve in such a cover.  

By the same argument as in Step $4$, the restriction of $F$ to $C$ is in fact a morphism of varieties.  Let $V$ denote the standard polarized variation of Hodge structure (PVHS) over $A$, where the fiber over the PPAV $A$ is 
$H^1(A;\Zb)$.  Let $V_F$ (resp.\ $V_{\J}$) be the pullback of $V$ to $C$ via $F$ (resp.\ $\J$) restricted to $C$.  
Since $\pi_1(C)\to\Mod(S_g)$ is surjective, it follows that each composition $(\J\circ i)_*$ and $(F\circ i)_*$ is surjective on $\pi_1$, so that $V_F$ and $V_{\J}$ are simple PVHS.  It follows that
\[H^0(C;\Hom(V_F,V_{\J}))\cong\Z\]
as abelian groups.  Since $V_F$ and $V_{\J}$ have the same weight, it follows that this is a Hodge structure of weight $0$ and dimension $1$.  Thus it must be $\Zb(0)$.   The Theorem of the Fixed Part gives that for each $x\in C$ that
\[H^0(X,\Hom(V_F,V_{\J}))_x\to \Hom(V_F,V_{\J})_x\]
is a morphism of Hodge structures, so that $(V_F)_x\cong (V_{\J})_x$ is an isomorphism.   

Theorem 7.24 of Schmid \footnote{As Schmid remarks, this result with target a locally symmetric variety - which is the case here - already follows from Borel-Narasimhan \cite{BN}, as in our Step 2.} the quotient states that for two variations of Hodge $V,W$ structures over a base $C$ that is a Zariski open in a compact analytic space, and if there exists $x\in C$ with $V_x$ isomorphic to $W_x$, and if the isomorphism preserves the action of $\pi_1(C)$, then the isomorphism extends to an isomorphism of the variations of Hodge structure.  Applying this to our case gives that 
\begin{equation}
\label{eq:iso}
V_F\cong V_{\J} \ \ \text{as PVHS.}
\end{equation}

Now, if $\Ac_g$ were a classifying space for PVHS, then \eqref{eq:iso} would imply that $f=\J$.  However, this is not quite true since $\Ac_g$ is only a coarse moduli space for PPAV.  Thus one must repeat the above argument for $\Ac_g[3]$, the (fine!) moduli space of PPAV with level $3$ structure, conclude that the lifts of $F$ and $J$ agree on this cover, and intertwine the $\Sp(2g,\Fb_3)$-actions. We leave the details to the reader.
 \end{proof}

\bigskip{\noindent
Dept. of Mathematics, University of Chicago\\
E-mail: bensonfarb@gmail.com

\end{document}